\newcommand{\Nset}{\mathbb{N}}
\newcommand{\Rset}{\mathbb{R}}
\newcommand{\Uset}{\mathbb{U}}
\newcommand{\Yset}{\mathbb{Y}}
\newcommand{\bg}{\mathbf{g}}
\definecolor{Matlab_blue}{rgb}{0, 0.4470, 0.7410} 
\definecolor{Matlab_orange}{rgb}{0.8500, 0.3250, 0.0980} 
\definecolor{Matlab_yellow}{rgb}{0.9290 0.6940 0.1250} 
\definecolor{green_dark}{rgb}{0.0, 0.5, 0.0}
\newcommand{\col}{\operatorname{col}}
\newcommand{\figurename}[1]{Fig.#1}
\newtheorem{theorem}{Theorem}[section]
\newtheorem{problem}[theorem]{Problem}
\newtheorem{lemma}[theorem]{Lemma}
\newtheorem{definition}[theorem]{Definition}
\newtheorem{remark}[theorem]{Remark}
\newtheorem{assumption}[theorem]{Assumption}
\title{\LARGE \bf
A Kernelized Operator Approach to \\Nonlinear Data-Enabled Predictive Control
}
\author{
Thomas de Jong$^{1}$, Siep Weiland$^{1}$,
Mircea Lazar$^{1}$
\thanks{$^{1}$~Department of Electrical Engineering,
        Eindhoven University of Technology, 5612 AZ Eindhoven, The Netherlands. E-mails of the authors: \texttt{t.o.d.jong@tue.nl, S.Weiland@tue.nl, m.lazar@tue.nl}.}
}
\begin{document}

\maketitle
\thispagestyle{empty}
\pagestyle{empty}

\begin{abstract}
This paper considers the design of nonlinear data-enabled predictive control (DeePC) using kernel functions. Compared with existing methods that use kernels to parameterize multi-step predictors for nonlinear DeePC, we adopt a novel, operator-based approach. More specifically, we employ a universal product kernel parameterization of nonlinear systems operators as a prediction mechanism for nonlinear DeePC. 
We show that by using a product reproducing kernel Hilbert space (RKHS) to learn the system trajectories, big data sets can be handled effectively to construct the corresponding product Gram matrix. Moreover, we show that the structure of the adopted product RKHS representation allows for a computationally efficient DeePC formulation. Compared to existing methods, our approach achieves substantially faster computation times for the same data size. This allows for the use of much larger data sets and enhanced control performance. 
\end{abstract}

\section{INTRODUCTION}
Willems' fundamental lemma underpins data-driven control by enabling the prediction of system behavior directly from data, without requiring an explicit model \cite{willems2005note}. This concept forms the foundation of data-enabled predictive control (DeePC), which uses historical data for predicting future trajectories \cite{coulson2019data}. For linear time-invariant (LTI) systems and noise-free data, equivalences between Model Predictive Control (MPC), Subspace Predictive Control (SPC) and DeePC have been established \cite{fiedler2021relationship}. Extending DeePC to nonlinear systems has been an area of active research. For example, in \cite{lazar2024basis}, DeePC was generalized to nonlinear systems using general basis functions, while in \cite{lian2021koopman}, the Koopman framework was employed to parameterize the DeePC predictor. The Koopman approach provides a way to represent nonlinear dynamics by mapping system states to a higher-dimensional space by means of observable functions, where a linear operator governs their evolution. Identifying suitable basis functions or Koopman observable functions can be done using for example neural networks as was done in \cite{lazar2024neural} to construct a basis, or as done in \cite{lian2021koopman,de2024koopman} for learning Koopman observable functions.

Alternatively, function estimation using regularized kernel methods offers a tractable solution for learning dynamical systems due to the well-posedness of function classes in RKHSs \cite{pillonetto2014kernel, scholkopf2001generalized}. This has led to increased interest in kernel-based approaches for data-driven control \cite{care2023kernel, martin2023guarantees, hu2023learning}. Several kernel-based approaches have also emerged in the field of data-driven predictive control. In \cite{maddalena2021kpc} a multi-step kernel-based predictor was identified and used within an MPC framework; therein, tight bounds on the kernelized multi-step predictors' approximation error were derived. More recently, \cite{huang2023robust} presented a kernelized DeePC formulation for nonlinear systems, which corresponds to solving a system of equations similar to Willem's fundamental lemma. Therein, methods for robustification and efficient online implementation based on kernels that are linear in future control inputs were presented. In \cite{lazar2024basis} it was shown that the special structure of basis or kernel functions that are linear in the future control inputs corresponds in fact to a DeePC formulation of linear-in-control input Koopman MPC \cite{korda2020koopman}. The above kernelized predictors for data-driven predictive control require in general usage of kernel functions defined on the stacked space of initial conditions and future control input sequences. Generating long persistently exciting input sequences that sample effectively the corresponding stacked space can be challenging for nonlinear systems, however. Persistency of excitation conditions that lead to exact kernelized representations for certain classes of nonlinear systems were recently presented in \cite{Timm_Ker_Lemma}, along with kernelized fundamental lemmas for linear, Hammerstein and flat nonlinear systems. 

Alternatively, a discrete-time dynamical system can be regarded as an operator defined on a Hilbert space \cite{van2022kernel} that maps sequences of inputs to sequences of outputs. More recently, \cite{Lazar2024productkernel} defined the domain of the operator as a \textit{product} Hilbert space. The corresponding operator maps input sequences and states to output sequences. Therein, it was proven that for positive definite kernels \cite{Micchelli2006} the resulting product RKHS is dense in the space of nonlinear systems operators and that its unique minimizer is a universal approximator. The developed results in \cite{Lazar2024productkernel} yield an alternative formulation of Willems' fundamental lemma and a new insight in persistency of excitation for nonlinear systems. Specifically, \cite{Lazar2024productkernel} points out that a that a persistently exciting input sequence should be paired with a linearly independent/distinct set of points in the space of initial conditions. This allows for generating larger data sets of trajectories by using a short, persistently exciting input sequence that is applied to several distinct initial conditions. Hence, it would be very interesting to employ the product RKHS operator parameterization for designing nonlinear DeePC algorithms. 

To this end, in this paper, building on the framework from \cite{Lazar2024productkernel}, we develop a kernelized operator approach for data-driven control of nonlinear systems through the DeePC framework. By modeling discrete-time nonlinear systems as operators on a Hilbert space, our method generalizes beyond linear systems and incorporates specific classes of nonlinear systems not considered by previous methods that rely directly on the fundamental lemma such as \cite{Timm_Ker_Lemma, lian2021nonlinear}. A main challenge in extending the kernelized operator framework of \cite{Lazar2024productkernel} for data-driven control is the dependence of the predictor on the Gram matrix, whose dimension scales with both input and state data. We demonstrate that the product kernel structure allows separation of the input and state data, which reduces the dimension of the DeePC equality constraints significantly. Additionally, we introduce a systematic data-generation method, providing a structured approach for creating datasets required to learn the operator. Our product kernel formulation offers computational advantages over previous kernel-based DeePC approaches, such as \cite{huang2023robust}.


\paragraph*{Notation and basic definitions} Let $\mathbb{R}$, $\mathbb{R}_{+}$ and $\mathbb{N}$ denote the set and field of real numbers, the set of non-negative reals, and the set of non-negative integers. For every $c \in \mathbb{R}$ and $\Pi \subseteq \mathbb{R}$ define $\Pi_{\geq c} := \{k \in \Pi | k \geq c \}$. A square $n \times n$ matrix where all elements of  $1/n$ is denoted by $\mathbb{1}_{n}$ and $\mathbf{1}_n$ denotes a vector in $\mathbb{R}^n$ with all elements equal to $1/n$. For two matrices $A \in \Rset^{n\times m}$, $B \in \Rset^{p \times q}$, we denote their Kronecker product by $A\otimes B$, defined as
$$
A \otimes B = \begin{bmatrix}
    a_{1,1} B & \dots & a_{1,m} B\\
    \vdots & & \vdots \\
    a_{n,1} B & \dots & a_{n,m} B
\end{bmatrix}.
$$
Throughout this paper, for any finite number $q\in\mathbb{N}_{\geq 1}$ of column vectors or functions $\{\xi_1,\dots,\xi_q\}$ we will make use of the operator $\col(\xi_1,\dots,\xi_q) := [\xi_1^\top,\dots,\xi_q^\top]^\top$.

\section{Preliminaries}\label{sec:preliminaries_problem_statement}
We consider nonlinear MIMO systems in state space form with inputs $u\in\mathbb{R}^m$, states $x\in\mathbb{R}^n$ and outputs $y\in\mathbb{R}^p$, i.e.,
\begin{equation}\label{eqn:state_space}
    \begin{aligned}
         x_{k+1} &= f(x_{k},u_{k}), \quad k\in\mathbb{N}, \\
        y_{k} &= h(x_{k}).
    \end{aligned}
\end{equation}
The functions $f:\mathbb{R}^{n} \times \mathbb{R}^{m} \rightarrow \mathbb{R}^{n}$ and $h:\mathbb{R}^{n} \rightarrow \mathbb{R}^{p}$ are assumed to be \emph{unknown}. We denote by $\mathbb{X} \subseteq \mathbb{R}^n$, $\mathbb{U} \subseteq \mathbb{R}^m$ and $\mathbb{Y} \subseteq \mathbb{R}^p$ the constraint admissible sets of states, inputs and outputs respectively. 

For simplicity of exposition we assume that the state is measured, but the derived methods extend directly to a surrogate state consisting of stacked past inputs and outputs. 
\begin{definition}
    A function $k:\mathcal{Z} \times \mathcal{Z} \rightarrow \mathbb{R}$, where $\mathcal{Z}$ denotes the data space, is called a \emph{kernel function} if it satisfies the following properties: (\emph{i}) it is symmetric, i.e., $k(z_1, z_2) = k(z_2, z_1)$ for all $(z_1, z_2) \in \mathcal{Z} \times \mathcal{Z} $; and (\emph{ii}) it is positive semi-definite, i.e., for any $T>0$ and any $\{z_1, . . . , z_T \} \in \mathcal{Z}$ the matrix
    \begin{align}
        K := \begin{pmatrix}
            k(z_1,z_1) & \dots & k(z_{1},z_{T}) \\
            \vdots & \ddots & \vdots \\
            k(z_{T},z_1) & \dots & k(z_{T},z_{T}) 
            \end{pmatrix} \in \mathbb{R}^{T \times T},
    \end{align}    
    is positive semi-definite. The matrix $K$ is called the Gram matrix. Moreover, a kernel function $k$ is called a \emph{universal kernel} (or a positive definite kernel) if its corresponding Gram matrix is positive definite for any $\{z_1,\dots,z_T\}\in\mathcal{Z}$ with distinct points and any $T>0$.
\end{definition}

\begin{definition}
    Given a kernel function $k : \mathcal{Z} \times \mathcal{Z} \rightarrow \Rset$, a Hilbert space $\mathcal{H}(k, \mathcal{Z})$ is a reproducing kernel Hilbert space of functions from $\mathcal{Z}$ to $\mathbb{R}$ for $k$ if (\emph{i}) for every $z \in \mathcal{Z}$ , the function $k(z, \cdot) \in \mathcal{H}(k, \mathcal{X} )$ and (\emph{ii}) the reproducing property holds, i.e., $f(z) = \langle f, k(z,\cdot) \rangle_{\mathcal{H}(k,\mathcal{Z})}$ for every $f \in \mathcal{H}(k,\mathcal{Z})$ and $z \in \mathcal{Z}$, where $\langle \cdot, \cdot \rangle_{\mathcal{H}(k,\mathcal{Z})}$ denotes the inner product associated with $\mathcal{H}(k,\mathcal{Z})$. For a positive definite kernel $k$ we have that for all $T>0$ it holds that $\text{span}\{k(\cdot, z_1), k(\cdot, z_2), \dots , k(\cdot, z_T ) \}  \subseteq \mathcal{H}(k, \mathcal{Z})$.
\end{definition}

In what follows we will make use of the following notation for the vector-kernel corresponding to the standard basis, i.e., $\text{span}\{k(\cdot, z_1), k(\cdot, z_2), \dots , k(\cdot, z_T )\}:$
$$
 \mathbf{k}(z) := \col(k(z, z_1), k(z, z_2), \dots  , k(z, z_T )) \in \Rset^T, \forall z \in \mathcal{Z}.
$$
The entries of the Gram matrix or the kernel functions are related to a feature map as follows, 
$$
K_{ij} = k(z_i,z_j) = \langle \phi(z_i), \phi(z_j) \rangle_{\mathcal{H}(k,\mathcal{Z})},
$$
where $k(\cdot,\cdot)$ is a nonlinear kernel function and $\phi : \mathcal{Z} \rightarrow \mathcal{H}(k,\mathcal{Z})$ is the feature map. It is noteworthy to mention that the above equation does not explicitly require the computation of the feature map $\phi(z_j)$, but requires only the characterization of the dot product in the feature space defined using a kernel function $k$. Two important classes of universal kernels \cite{Micchelli2006}, which are also radial basis functions that comply with the conditions of \cite{chen1995approximation} are Gaussian kernels, i.e.,
\begin{equation}\label{eqn:gaussian}
    k(z_1,z_1) = e^{-\frac{1}{\sigma^2} \|z_1-z_2 \|_2^2}
\end{equation}
and Hardy reverse multiquadratics kernels, i.e.,
\begin{equation}\label{eqn:hardy}
    k(z_1,z_1) = \left( 1 + \frac{1}{\sigma^2} \|z_1 - z_2 \|_2^2 \right)^{\frac{1}{2}}
\end{equation}
where $\sigma$ is a positive real number. 

To establish the problem setup for some finite $N\in\Nset_{\geq 1}$ we define relevant Hilbert spaces of square summable sequences $\mathcal{U}:= l_2(\{0,1,\dots,N-1\},\Rset^m)$, $\mathcal{Y}:= l_2(\{1,\dots,N\},\Rset^p)$ and $\mathcal{X}:= l_2(\{0\},\Rset^n)$, where the notation was inspired by \cite{shali2024towards}. Next consider a set of input trajectories $\mathbf{U} = \{\mathbf{u}_1,\dots,\mathbf{u}_{T_u}\}$ with $\mathbf{u}_i \in \mathcal{U}$, a set of initial states $\mathbf{X}_0 = \{x_1,\dots,x_{T_x}\}$ with $x_j\in\mathcal{X}$ and a corresponding set of output trajectories $\mathbf{Y} = \{\mathbf{y}^1_1,\dots,\mathbf{y}^1_{T_u},\dots,\mathbf{y}^{T_x}_{1},\dots,\mathbf{y}^{T_x}_{T_u}\}$ with $\mathbf{y}^j_i \in \mathcal{Y}$. To this end let $k_u : \mathcal{U} \times \mathcal{U} \rightarrow \Rset$ and $k_x : \mathcal{X} \times \mathcal{X} \rightarrow \Rset$ be kernel functions, respectively. Let $K_u \in \mathbb{R}^{T_u\times T_u}$, $K_x \in \mathbb{R}^{T_x \times T_x}$ denote the corresponding Gram matrices and for any $x \in \mathcal{X}$, $\mathbf{u} \in \mathcal{U}$ and $\mathbf{y} \in \mathcal{Y}$ let  $\mathbf{k}_u(\mathbf{u})$, $\mathbf{k}_x(x)$ denote the corresponding vector-kernels, i.e., 
\begin{align*}
    \mathbf{k}_u(\mathbf{u}) := \begin{bmatrix}
        k_u(\mathbf{u},\mathbf{u}_1) \\
        \vdots \\
        k_u(\mathbf{u},\mathbf{u}_{T_u})
    \end{bmatrix}, \quad \mathbf{k}_x(x) := \begin{bmatrix}
        k_x(x,x_1) \\
        \vdots \\
        k_x(x,x_{T_u})
    \end{bmatrix}.
\end{align*}
Specifically, we build the product kernel function (e.g., see \cite{steinwart2008support}) by $k_{\otimes} : (\mathcal{U}\times \mathcal{X}) \times (\mathcal{U}\times \mathcal{X}) \rightarrow \mathbb{R}$  as introduced in \cite{Lazar2024productkernel}
\begin{align}
     k_{\otimes}((\mathbf{u}_1,x_1),(\mathbf{u}_2,x_2)) := k_u(\mathbf{u}_1,\mathbf{u}_2) k_x(x_1,x_2),
\end{align}
with corresponding Gram matrix and vector-kernel, i.e.,
\begin{align}
     K_{\otimes} = K_u \otimes K_x,\text{ and } \mathbf{k}_{\otimes}(\mathbf{u},x) = \mathbf{k}_u(\mathbf{u}) \otimes \mathbf{k}_x(x),
\end{align}
and unique product RKHS $\mathcal{H}(k_\otimes, \mathcal{X} \times \mathcal{U})$. As pointed out in \cite{Lazar2024productkernel}, through the prism of Willems’ fundamental lemma \cite{willems2005note}, we consider the following system of equations: 
\begin{subequations}
\label{eq:willems}
    \begin{align}
    K_{\otimes} \mathbf{g} &= \mathbf{k}_{\otimes}(\mathbf{u},x), \label{eqn:willemsa} \\
    \mathbf{Y} \mathbf{g} &= \mathbf{y}, 
    \end{align}
\end{subequations}
where $\mathbf{g} \in \mathbb{R}^{T_u T_x}$. 
\begin{remark}\label{kernelized_lemma} The kernelized formulation of the fundamental lemma from \cite{Timm_Ker_Lemma,huang2023robust} is recovered from \eqref{eq:willems} if $T_x=1$ and $k_x(x,x) = 1$ for all $x\in\mathcal{X}$. 

This yields the formulation: 
\begin{subequations}
\label{eq:willems_u}
    \begin{align}
   K_u \mathbf{g} &= \mathbf{k}_u(\mathbf{u}), \\
    \mathbf{Y} \mathbf{g} &= \mathbf{y}, 
    \end{align}
\end{subequations}
which corresponds to using a data set with only one initial condition. The conditions $k_x(x,x) = 1$ for all $x\in\mathcal{X}$ is satisfied for many popular kernels such as the Gaussian kernel \eqref{eqn:gaussian} and Hardy reverse multiquadratics kernels \eqref{eqn:hardy}. The fundamental lemma for discrete-time LTI systems \cite{willems2005note} is recovered from \eqref{eq:willems_u} if in addition linear kernel functions are used for $k_u$ as shown in \cite{Timm_Ker_Lemma}.
\end{remark}

\section{Kernelized Operator Data-Enabled Predictive Control}\label{sec:KerODeePC}
In this section we present the new formulation of nonlinear DeePC using product kernel-based operators. 
\subsection{Kernelized Operator DeePC problem formulation}
We would like to use the data sets $\mathbf{U}$, $\mathbf{Y}$ and $\mathbf{X}_0$ to learn the underlying operator $\mathbf{y} = G(\mathbf{u})(x)$ that maps pairs of points in $\mathcal{U}$ and $\mathcal{X}$ into points in $\mathcal{Y}$, i.e.,
$$
G(\mathbf{u})(x) = \Theta^* \mathbf{k}_{\otimes}(\mathbf{u},x),
$$
where $\Theta^*\in\mathbb{R}^{Np\times T_x T_u}$ is unknown. The operator learning problem can be formulated as
\begin{align}\label{eqn:operator_learning}
    \Theta^* = \arg \min_{\Theta} \| \mathbf{Y} -  \Theta K_{\otimes} \|_F^2,
\end{align}
where $\|M\|_F$ denotes the Frobenius norm. Note that the minimizer to \eqref{eqn:operator_learning} is given by $\Theta^* = Y K^{-1}_{\otimes}$ if $K_{\otimes}$ is invertible. Let $\mathbf{k}^i_j = \mathbf{k}_{\otimes}(\mathbf{u}_j,x_i)$ then $K_{\otimes} = \begin{bmatrix} \mathbf{k}^1_1 & \dots & \mathbf{k}^1_{T_u} & \dots & \mathbf{k}^{T_x}_{1} & \dots & \mathbf{k}^{T_x}_{T_u} \end{bmatrix}$. Note that the initial conditions and input sequences used to construct the columns of $K_{\otimes}$ are related to the output trajectories corresponding to the columns of $\mathbf{Y}$. To formulate the prototype  kernelized operator based DeePC (KerODeePC) problem, at time $k \in \Nset$ we define:
\begin{align*}
    \mathbf{u}_{[0,N-1]}(k) &:= \col(u_{0|k},\dots,u_{N-1|k}), \\
     \mathbf{y}_{[1,N]}(k) &:= \col(y_{1|k},\dots,y_{N|k}),
\end{align*}
where for a variable $a$ the notation $a_{i|k}$ is the predicted value of $a_{k+i}$ at time instant $k$. 
\begin{remark}
    Due to the properties of the Kronecker product it holds that $K_{\otimes}^{-1} = K_u^{-1} \otimes K_x^{-1}$. This significantly reduces the computational complexity of inverting $K_{\otimes}$. I.e. $K_{\otimes}$ is guaranteed to be invertible if and only if $K_u$ and $K_x$ are invertible.
\end{remark}

Next, we present the KerODeePC formulation.
\begin{problem} (KerODeePC Problem) \label{prob:KDPC}
    \begin{subequations}
\begin{align}
\min_{\Xi} &\quad l_N(y_{N|k}) + \sum_{i=0}^{N-1}l(y_{i|k},u_{i|k}) + \lambda l_g(\mathbf{g}_k),  \nonumber \\
&\text{subject to constraints:} \nonumber \\
&K_{\otimes} \mathbf{g}_k = \mathbf{k}_{\otimes}(\mathbf{u}_{[0,N-1]}(k),x_k) \label{eqn:DeePC1b}, \\
&Y \mathbf{g}_k = \mathbf{y}_{[1,N]}(k), \label{eqn:DeePC1c}\\
&(\mathbf{y}_{[1,N]}(k), \mathbf{u}_{[0,N-1]}(k)) \in \Yset^N \times \Uset^N,\label{eqn:DeePC1d}
\end{align}
\end{subequations}
\end{problem}
where $\Xi = \{\mathbf{u}_{[0,N-1]}(k),\mathbf{y}_{[1,N]}(k),\mathbf{g}_k\}$. For suitable references $(r^y_{i|k}, r^u_{i|k})$, we define the terminal cost $l_N(y_{N|k}):=(y_{N|k}-r^y_{N|k})^\top P(y_{N|k}-r^y_{N|k})$, and the stage cost $l(y_{i|k},u_{i|k}):=(y_{i|k}-r^y_{i|k})^\top Q(y_{i|k}-r^y_{i|k})+ (u_{i|k}-r^u_{i|k})^\top R (u_{i|k}-r^u_{i|k})$. $l_g(\mathbf{g}_k) := \|g_k\|_2^2$ is a regularization cost. We assume that $P \succ 0$, $Q \succ 0$, $R \succ 0$ and $\lambda>0$. 

Note that the gram matrix $K_{\otimes}$ and the data matrix $\mathbf{Y}$ are constructed from the offline collected data matrices $\mathbf{U}$, $\mathbf{Y}$ and $\mathbf{X}_0$. In the next subsection we discuss a systematic method for generating these data matrices.

\subsection{Data-generation for operator learning}\label{sec:data-generation}
For simplifying the selection of centroids in the data generation algorithm we adopt the following assumption.
\begin{assumption}\label{ass:1}
The state constraints and input constraints sets \( \mathbb{X}, \mathbb{U} \) are polytopes that contain the equilibrium points or references of interest in their interior.
\end{assumption}

If the constraints sets are polytopes (or boxes), than quasi-random methods can be used to initialized the centroids, e.g., such as Halton sequences. Also, for simplicity of exposition, we assume that all generated trajectories remain within the state constraints set $\mathbb{X}$. Ideally, we aim to capture the system's dynamics across all potential initial conditions within this set. However, this leads to computational complexity. Moreover, many systems are not globally asymptotically stable which complicates the selection of a set of initial conditions. With this in mind, we propose a systematic selection method for initial conditions in Algorithm~\ref{alg:1}.
\begin{algorithm}
\caption{Initial condition generation by k-means}\label{alg:1}
\begin{algorithmic}[1]
\State \textbf{Input:} $T_{u,ini}$, $T_x$, $x_0$
\State Define an input signal: $\Tilde{\mathbf{u}} = \{u_1, u_2, \dots, u_{T_{u,ini}}\}$
\State Apply $\Tilde{\mathbf{u}}$ to \eqref{eqn:state_space} and $x_0$ and store $\Tilde{\mathbf{X}} = \begin{bmatrix}
        x_1 & \dots  & x_{T_{u,ini}}
        \end{bmatrix}$
\State \textbf{Initialize} \(k\) cluster centers $\mathbf{\mu}  = \begin{bmatrix}
        \mu_1 &  \mu_2 & \dots & \mu_{T_x}
        \end{bmatrix}$ (centroids) in $\mathbb{X}$
\Repeat
    \State Assign points to clusters $\mathcal{C}_i$ given by 
    $$
    \mathcal{C}_i = \left\{ x_j \in \Tilde{\mathbf{X}} : d(x_j,\mu_i)  \leq d(x_j,\mu_l)  \; \forall \, l \in [1,T_x]\right\}
    $$
    \State Update centroids as mean of each cluster
\Until{Cluster assignments do not change or max iterations reached}
\State \textbf{Output:} Set of initial conditions $\mathbf{X}_0 = C$
\end{algorithmic}
\end{algorithm}

If all elements in the state sequences $\Tilde{\mathbf{X}}$ lie within the constraint-admissible set of states $\mathbb{X}$, then the resulting initial condition set $\mathbf{X}_0$ will also remain within $\mathbb{X}$. After generating the sequences $\Tilde{\mathbf{X}}$, the selection of initial conditions $\mathbf{X}_0$, as outlined in line $4-9$ of Algorithm~\ref{alg:1}, can be achieved by applying MATLAB's \texttt{kmeans} function. Note that different methods exist for step $4-9$. For example the centroids can be initialized uniformly in $\mathbb{X}$, as distance measure one can choose $d(x_i,\mu_j) = \|x_1 - \mu_j \|_2^2$ and different stopping criteria can be used. 
Once a set of initial conditions $\mathbf{X}_0$ is obtained we can construct the matrices $K_{\otimes}$ and $\mathbf{Y}$ in Problem~\ref{prob:KDPC}. In this regard, consider a set of input sequences $\{\mathbf{u}_i : i = 1, 2, \dots, T_u\}$ with $\mathbf{u}_i \in \mathcal{U} $ and set of initial states $\{x_j : j = 1, 2, \dots, T_x \}$ with $x_j \in \mathbf{X}_0$ we generate a corresponding set of output trajectories as summarized in Algorithm~\ref{alg:2}.
\begin{algorithm}
\caption{Data-generation for operator learning}\label{alg:2}
\begin{algorithmic}[1]
\State \textbf{Input:} $\mathbf{X}_0$
\State Define the set of input signals: $\{\mathbf{u}_1, \mathbf{u}_2, \dots, \mathbf{u}_{T_u}\}$, where $\mathbf{u}_j = \{u_0^j, u_1^j, \dots, u_{N-1}^j\}$
\For{each initial condition $x^i$ from $i=1$ to $T_x$}
    \For{each input signal $\mathbf{u}_j$ from $j=1$ to $T_u$}
        \State Simulate the system \eqref{eqn:state_space} and generate output
        \State signals $\mathbf{y}_j^i = \{y_1^{i,j}, y_2^{i,j}, \dots, y_{N}^{i,j}\}$
    \EndFor
\EndFor
\end{algorithmic}
\end{algorithm}

We collect the initial conditions, input sequences and output sequences in the data matrices:
\begin{align*}
    \mathbf{X}_0 &:= \begin{bmatrix} x_1 & \dots & x_{T_x} \end{bmatrix}, \quad  \mathbf{U} := \begin{bmatrix} \mathbf{u}_1 & \dots & \mathbf{u}_{T_u} \end{bmatrix}, \\
    \mathbf{Y} &:= \begin{bmatrix}\mathbf{y}^1_1 & \dots & \mathbf{y}^1_{T_u} & \dots,\mathbf{y}^{T_x}_{1} & \dots & \mathbf{y}^{T_x}_{T_u} \end{bmatrix}.
\end{align*}
The corresponding $K_x$ and $K_u$ matrices are obtained by applying the feature maps $\mathbf{k}_x$ and $\mathbf{k}_u$ to the columns of the matrices $\mathbf{X}_0$ and $\mathbf{U}$ respectively, i.e.,
\begin{align*}
    K_x &= \begin{bmatrix}
    \mathbf{k}_x(x_1) & \dots & \mathbf{k}_x(x_{T_x})
\end{bmatrix}, \\
K_u &= \begin{bmatrix}
    \mathbf{k}_u(\mathbf{u}_1) & \dots & \mathbf{k}_u(\mathbf{u}_{T_u})
\end{bmatrix}.
\end{align*}
Note that the dimension of $K_{\otimes}$ becomes very large for large $T_x$ and $T_u$. In this regards we present an alternative formulation of Problem~\ref{prob:KDPC} in the next section.

\section{Computationally Efficient KerODeePC}\label{sec:reduction}
The dimension of $K_{\otimes}\in \mathbb{R}^{T_x T_u \times T_x T_u}$ becomes very large for large $T_u$ and $T_x$. As a result, implementation of Problem~\ref{prob:KDPC} becomes challenging. To address this, next, we exploit the structure of the product kernel function $\mathbf{k}_{\otimes}(\mathbf{u},x)$ to rewrite Problem~\ref{prob:KDPC} such that the implementation becomes much more efficient, as follows. 
\begin{lemma}\label{lemma_1} 
Let \(\mathbf{k}_x(x) \neq \mathbf{0}\) for all \(x \in \mathcal{X}\), and suppose \(K_x\) and \(K_u\) are full-rank matrices. Then, the system of equations \eqref{eq:willems} can be equivalently re-written as the reduced system of equations: 
\begin{subequations} \label{eq:willems_reduced} \begin{align} \Omega(x) \mathbf{g} &= \mathbf{k}_u(\mathbf{u}), \label{eq:willems_reduceda} \\ \mathbf{Y} \mathbf{g} &= \mathbf{y}, \label{eq:willems_reducedb} \end{align} \end{subequations} 
where \(\Omega(x) := \left( I_{T_u} \otimes \frac{\mathbf{k}^\top_{x}(x)}{\| \mathbf{k}^\top_{x}(x) \|_2^2} \right) K_{\otimes} \in \mathbb{R}^{T_u \times T_u T_x}\). \end{lemma}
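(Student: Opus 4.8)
The plan is to exploit the Kronecker (product) structure of $K_{\otimes}$ and $\mathbf{k}_{\otimes}$, and to observe that the output equation $\mathbf{Y}\mathbf{g}=\mathbf{y}$ is common to both \eqref{eq:willems} and \eqref{eq:willems_reduced}; hence it suffices to relate the first equations \eqref{eqn:willemsa} and \eqref{eq:willems_reduceda}. The central device is the left ``contraction'' matrix $\Gamma(x):=I_{T_u}\otimes\frac{\mathbf{k}_x^\top(x)}{\|\mathbf{k}_x(x)\|_2^2}\in\mathbb{R}^{T_u\times T_uT_x}$, which is well defined precisely because $\mathbf{k}_x(x)\neq\mathbf{0}$, and which satisfies $\Gamma(x)K_{\otimes}=\Omega(x)$ directly from the definition of $\Omega(x)$.

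For the forward implication I would left-multiply \eqref{eqn:willemsa} by $\Gamma(x)$. The left-hand side becomes $\Gamma(x)K_{\otimes}\mathbf{g}=\Omega(x)\mathbf{g}$, while for the right-hand side I would invoke the mixed-product property $(A\otimes B)(C\otimes D)=(AC)\otimes(BD)$ together with $\mathbf{k}_{\otimes}(\mathbf{u},x)=\mathbf{k}_u(\mathbf{u})\otimes\mathbf{k}_x(x)$ to obtain
\[
\Gamma(x)\mathbf{k}_{\otimes}(\mathbf{u},x)=\big(I_{T_u}\mathbf{k}_u(\mathbf{u})\big)\otimes\Big(\tfrac{\mathbf{k}_x^\top(x)\mathbf{k}_x(x)}{\|\mathbf{k}_x(x)\|_2^2}\Big)=\mathbf{k}_u(\mathbf{u})\otimes 1=\mathbf{k}_u(\mathbf{u}),
\]
since the scalar factor collapses to $1$. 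This yields \eqref{eq:willems_reduceda}, so every solution of \eqref{eq:willems} solves \eqref{eq:willems_reduced}. The same computation exposes the useful factorization $\Omega(x)=K_u\otimes\big(\mathbf{k}_x^\top(x)K_x/\|\mathbf{k}_x(x)\|_2^2\big)$, from which I would record that $\Omega(x)$ has full row rank $T_u$ (because $K_u$ is full rank and the row $\mathbf{k}_x^\top(x)K_x$ is nonzero by full rank of $K_x$ and $\mathbf{k}_x(x)\neq\mathbf{0}$), so that the reduced system is consistent.

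The converse is where the work lies, and it is the step I expect to be the main obstacle: left-multiplication by $\Gamma(x)$ is not injective, and the factorization above shows $\ker\Omega(x)$ has dimension $T_u(T_x-1)$, so \eqref{eq:willems_reduceda} alone cannot regenerate the $T_uT_x$ scalar equations in \eqref{eqn:willemsa}. To close the equivalence I would lean on the full-rank hypotheses: since $K_{\otimes}=K_u\otimes K_x$ is invertible, \eqref{eqn:willemsa} has the \emph{unique} solution $\mathbf{g}^\star=(K_u^{-1}\mathbf{k}_u(\mathbf{u}))\otimes(K_x^{-1}\mathbf{k}_x(x))$, and I would verify directly that $\mathbf{g}^\star$ also satisfies \eqref{eq:willems_reduceda}. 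The remaining, delicate task is to argue that adjoining the common output equation $\mathbf{Y}\mathbf{g}=\mathbf{y}$ eliminates the spurious freedom in $\ker\Omega(x)$, so that the two full systems share the same solution set and, in particular, induce the same admissible prediction $\mathbf{Y}\mathbf{g}$. Concretely, I would try to show that the component of $K_{\otimes}\mathbf{g}$ that $\Gamma(x)$ discards — namely the part orthogonal to $I_{T_u}\otimes\mathbf{k}_x(x)$ — is reconstructable from the product structure of the data, so that feasibility for the reduced system forces the full constraint on the relevant subspace; controlling this kernel contribution through the full rank of $K_x$ is the point where I anticipate the argument to require the most care.
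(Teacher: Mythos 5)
Your forward direction is exactly the paper's proof, and in fact it is \emph{all} the paper proves: the paper rewrites $\mathbf{k}_{\otimes}(\mathbf{u},x)=(I_{T_u}\otimes\mathbf{k}_x(x))\,\mathbf{k}_u(\mathbf{u})$, observes that $I_{T_u}\otimes\mathbf{k}_x(x)$ has full column rank when $\mathbf{k}_x(x)\neq\mathbf{0}$, and left-multiplies \eqref{eqn:willemsa} by its pseudo-inverse $I_{T_u}\otimes\mathbf{k}_x^\top(x)/\|\mathbf{k}_x(x)\|_2^2$ --- which is precisely your $\Gamma(x)$ --- to obtain \eqref{eq:willems_reduceda}. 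Your mixed-product computation collapsing the scalar factor to $1$ is the same operation in different notation, and your factorization $\Omega(x)=K_u\otimes\bigl(\mathbf{k}_x^\top(x)K_x/\|\mathbf{k}_x(x)\|_2^2\bigr)$ with full row rank $T_u$ is a correct (and useful) refinement that the paper does not state explicitly.

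The ``delicate task'' you set yourself for the converse should be abandoned rather than completed: solution-set equality between \eqref{eq:willems} and \eqref{eq:willems_reduced} is false in general, precisely for the reason you identified --- $\mathcal{N}(\Omega(x))$ has dimension $T_u(T_x-1)>0$ while $K_{\otimes}$ is injective under the full-rank hypotheses. The lemma's phrase ``equivalently re-written'' must therefore be read in the weak sense your forward implication establishes (every solution of \eqref{eq:willems} solves \eqref{eq:willems_reduced}); the paper's proof stops exactly where yours does. The question of when the spurious solutions in $\mathcal{N}(\Omega(x))$ are harmless --- i.e., when the two systems induce the same output prediction $\mathbf{Y}\mathbf{g}$ --- is deliberately deferred by the paper to Lemma~\ref{lemma:reduction}, which shows that this holds if and only if $\mathbf{Y}\hat{\mathbf{g}}=\mathbf{0}$ for all $\hat{\mathbf{g}}\in\mathcal{N}(\Omega(x))$, a condition on the data that is \emph{not} automatic and cannot be derived from the hypotheses of the present lemma. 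So your instinct that contraction by $\Gamma(x)$ loses information is sound; the resolution is not a cleverer argument but the recognition that the lemma claims no more than your forward direction, with the residual gap being the subject of the paper's next result rather than a missing step in this one.
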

\begin{proof}
Due to the properties of the Kronecker product we can rewrite \eqref{eqn:willemsa} as follows:
    \begin{align*}
       K_{\otimes} \mathbf{g} &= \mathbf{k}_{\otimes}(\mathbf{u},x) \\
        &= \mathbf{k}_u(\mathbf{u}) \otimes \mathbf{k}_x(x) \\
        &= \left( I_{T_u} \otimes \mathbf{k}_x(\mathbf{x}) \right) \left( \mathbf{k}_u(\mathbf{u}) \otimes I_{1} \right).
    \end{align*}
Here we used the property that for two matrices $A\in\mathbb{R}^{m_1\times n_1}$ and $B\in\mathbb{R}^{m_2\times n_2}$ it holds that $A \otimes B = (I_{m_1}\otimes B)(A \otimes I_{n_2})$. Since $I_1 = 1$ this gives the following expression:
\begin{align}\label{eqn:proof1}
         K_{\otimes} \mathbf{g}  = \left( I_{T_u} \otimes \mathbf{k}_x(\mathbf{x}) \right) \mathbf{k}_u(\mathbf{u}) .
    \end{align}
 Moreover, for the Kronecker product it holds that $(A \otimes B)^{\dag} = A^{\dag} \otimes B^{\dag}$ and ${\displaystyle \operatorname {rank} (A \otimes B )=\operatorname {rank} A \,\operatorname {rank} B}$. Since $\operatorname {rank} I_{T_u} = T_u$ it holds that $\left( I_{T_u} \otimes \mathbf{k}_x(x) \right)$ is full column rank if $\mathbf{k}_x(x)\neq\mathbf{0}$. This implies that the pseudo-inverse exists and is given by:  
\begin{align*}
   \left( I_{T_u} \otimes \mathbf{k}_x(\mathbf{x}) \right)^{\dag} = \left( I_{T_u} \otimes \frac{\mathbf{k}^\top_{x}(x)}{\| \mathbf{k}_{x}(x) \|_2^2} \right).
\end{align*}
Multiplying both sides of \eqref{eqn:proof1} from the left by this expression gives \eqref{eq:willems_reduceda}, which completes the proof.
\end{proof}


By using the formulation in \eqref{eq:willems_reduced} as a prediction model for Problem~\ref{prob:KDPC}, the number of equality constraints reduces significantly from \(T_x T_u\) to \(T_u\) after substituting the initial condition \(x\). However, the decision vector $\mathbf{g}$ remains in \(\mathbb{R}^{T_x T_u}\), which continues to pose implementation challenges. In \cite{huang2023robust}, an iterative approach was introduced, solving $\mathbf{g}$ through a QP and \(\mathbf{u}\) through a nonlinear optimization. Here, we propose a one-shot solution to efficiently handle the full KerODeePC problem directly, based on an equivalent predictor representation inspired by \cite{lazar2024basis}. To this end, we will use the following notion of model equivalence. 

\begin{definition}\label{def:equivalence}
For the system \eqref{eqn:state_space} consider two models $\mathbf{y}^{M_1} = G_{M_1}(\mathbf{u},x)$ and $\mathbf{y}^{M_2} = G_{M_2}(\mathbf{u},x)$. These models are called \emph{equivalent} if for every $x\in\mathcal{X}$ and every $\mathbf{u} \in \mathcal{U}$ it holds that
\begin{align}
    \| \mathbf{y}^{M_1} - \mathbf{y}  \| = \| \mathbf{y}^{M_2} - \mathbf{y}  \|,
\end{align}
where $\mathbf{y}$ is the true system \eqref{eqn:state_space} output obtained form initializing \eqref{eqn:state_space} at $x$ and applying the sequences $\mathbf{u}$ recursively. 
\end{definition}

Notice that one way to establish model equivalence is to show that $\mathbf{y}^{M_1} = \mathbf{y}^{M_2}$ for all $x\in\mathcal{X}$ and every $\mathbf{u} \in \mathcal{U}$. 
\begin{lemma}\label{lemma:reduction}
    Consider the system of equations \eqref{eq:willems} and the reduced system of equations \eqref{eq:willems_reduced} defined using the same set of data
$\{\mathbf{U}, \mathbf{Y}, \mathbf{X}_0 \}$ generated using system \eqref{eqn:state_space}. Suppose $K_{\otimes} \succ 0$. Then, by Definition~\ref{def:equivalence}, the prediction models \eqref{eq:willems} and \eqref{eq:willems_reduced} are equivalent models for system \eqref{eqn:state_space} if and only if $\mathbf{Y}\Hat{\mathbf{g}}=\mathbf{0}$ for all $\Hat{\mathbf{g}}\in\mathcal{N}\left( \Omega(x)\right)$, for all $x \in \mathcal{X}$, where $\mathcal{N}(\cdot)$ denotes the null space.
\end{lemma}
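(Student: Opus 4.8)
The plan is to translate the equivalence question into a statement about how the non-uniqueness of $\mathbf{g}$ in the reduced system propagates through the output map $\mathbf{Y}$. First I would use the hypothesis $K_{\otimes}\succ 0$: since $K_{\otimes}$ is then invertible, the original system \eqref{eq:willems} has the unique solution $\mathbf{g}^* = K_{\otimes}^{-1}\mathbf{k}_{\otimes}(\mathbf{u},x)$, so the model-1 prediction $\mathbf{y}^{M_1} = \mathbf{Y}\mathbf{g}^*$ is a single-valued function of $(\mathbf{u},x)$. By the forward implication already established inside the proof of Lemma~\ref{lemma_1} (left-multiplication of \eqref{eqn:willemsa} by the pseudo-inverse of $I_{T_u}\otimes\mathbf{k}_x(x)$), this very $\mathbf{g}^*$ also satisfies the reduced equation \eqref{eq:willems_reduceda}. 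Hence \eqref{eq:willems_reduceda} is consistent and its complete solution set is the affine subspace $\mathbf{g}^* + \mathcal{N}(\Omega(x))$. Therefore every admissible model-2 vector has the form $\mathbf{g} = \mathbf{g}^* + \hat{\mathbf{g}}$ with $\hat{\mathbf{g}}\in\mathcal{N}(\Omega(x))$, and its prediction is $\mathbf{y}^{M_2} = \mathbf{Y}\mathbf{g}^* + \mathbf{Y}\hat{\mathbf{g}} = \mathbf{y}^{M_1} + \mathbf{Y}\hat{\mathbf{g}}$.

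Given this decomposition, the ``if'' direction is immediate: if $\mathbf{Y}\hat{\mathbf{g}} = \mathbf{0}$ for every $\hat{\mathbf{g}}\in\mathcal{N}(\Omega(x))$ and every $x\in\mathcal{X}$, then $\mathbf{y}^{M_2} = \mathbf{y}^{M_1}$ for all $(\mathbf{u},x)$, so the two prediction errors coincide and, by the observation following Definition~\ref{def:equivalence}, the models are equivalent.

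The ``only if'' direction is the part I expect to require the most care, because model $M_2$ is genuinely set-valued whenever $\mathcal{N}(\Omega(x))\neq\{\mathbf{0}\}$ (and indeed a rank count gives $\dim\mathcal{N}(\Omega(x)) = T_u(T_x-1)$), so ``equivalence'' has to be read as: every admissible model-2 solution yields the same error norm as model 1. I would argue by contraposition. Suppose there exist $x\in\mathcal{X}$ and $\hat{\mathbf{g}}\in\mathcal{N}(\Omega(x))$ with $v := \mathbf{Y}\hat{\mathbf{g}}\neq\mathbf{0}$. Fix any $\mathbf{u}$, let $\mathbf{y}$ be the true output and set $e := \mathbf{y}^{M_1}-\mathbf{y}$. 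Since $\mathcal{N}(\Omega(x))$ is a subspace, $t\hat{\mathbf{g}}\in\mathcal{N}(\Omega(x))$ for every $t\in\Rset$, so $M_2$ may legitimately return $\mathbf{y}^{M_2}(t) = \mathbf{y}^{M_1} + t v$. Then $\|\mathbf{y}^{M_2}(t)-\mathbf{y}\|^2 = \|e\|^2 + 2t\langle e,v\rangle + t^2\|v\|^2$ is a non-constant quadratic in $t$ because $\|v\|^2>0$, hence it cannot equal $\|\mathbf{y}^{M_1}-\mathbf{y}\|^2 = \|e\|^2$ for all $t$. Thus some admissible model-2 prediction has a strictly different error norm, so the models are not equivalent, completing the contrapositive. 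The only genuinely delicate point to nail down in the write-up is this set-valued reading of $M_2$; once fixed, the scaling argument makes the equivalence clean and avoids any dependence on the specific true output $\mathbf{y}$.
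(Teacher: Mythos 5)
Your proof is correct, and its core decomposition is the same as the paper's: both arguments characterize the solution set of the reduced system \eqref{eq:willems_reduced} as a particular solution plus $\mathcal{N}(\Omega(x))$, identify the output prediction of that particular solution with the unique prediction of the full system \eqref{eq:willems}, and thereby reduce equivalence to whether $\mathbf{Y}$ annihilates the null space. Your particular solution $\mathbf{g}^* = K_{\otimes}^{-1}\mathbf{k}_{\otimes}(\mathbf{u},x)$ is in fact the very same vector as the paper's $\Omega^{\dag}(x)\mathbf{k}_u(\mathbf{u})$, since the paper takes $\Omega^{\dag}(x) = K_{\otimes}^{-1}(I_{T_u}\otimes\mathbf{k}_x(x))$; you just arrive at it by noting that the unique solution of the full system also solves the reduced one (via the left-multiplication step of Lemma~\ref{lemma_1}), which bypasses the paper's explicit Kronecker manipulation showing $\mathbf{Y}K_{\otimes}^{-1}\left(\mathbf{k}_u(\mathbf{u})\otimes\mathbf{k}_x(x)\right) = \mathbf{Y}\Omega^{\dag}(x)\mathbf{k}_u(\mathbf{u})$. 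Where you genuinely depart from, and improve on, the paper is the ``only if'' direction. The paper's proof ends with ``from this it can be concluded that $\mathbf{y}^{\Omega} = \mathbf{y}^{\otimes}$ if and only if $\mathbf{Y}\hat{\mathbf{g}} = \mathbf{0}$,'' which silently equates model equivalence (equality of error \emph{norms}, per Definition~\ref{def:equivalence}) with equality of the prediction sets; as you observe, for one fixed $\hat{\mathbf{g}}$ these are not the same, since $\|e+v\| = \|e\|$ can hold with $v = \mathbf{Y}\hat{\mathbf{g}} \neq \mathbf{0}$ (take $\langle e, v\rangle = -\|v\|^2/2$). Your scaling argument---$t\hat{\mathbf{g}} \in \mathcal{N}(\Omega(x))$ for every $t$, so the squared error is a non-constant quadratic in $t$ whenever $\mathbf{Y}\hat{\mathbf{g}}\neq\mathbf{0}$---closes exactly this gap, making the converse rigorous and independent of the particular true output $\mathbf{y}$. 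In short: same decomposition, but your treatment of the set-valued reading of the reduced model is more careful than the paper's own.
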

\begin{proof}
    Let $\mathbf{y}^{\Omega}$ denote the solution for $\mathbf{y}$ in \eqref{eq:willems_reduced} generated by the data sets $\{\mathbf{U}, \mathbf{Y}, \mathbf{X}_0 \}$. This solution is not unique, but given by the set 
    \begin{align*}
       \mathbf{y}^{\Omega} \in \left\{ \mathbf{Y} \left(\Omega^{\dag}(x) \mathbf{k}_u(\mathbf{u}) + \Hat{\mathbf{g}} \right) : \Hat{\mathbf{g}} \in \mathcal{N}(\Omega(x)) \right\}.
        \end{align*}
Above we used the fact that the set of all $\bg$ that satisfy \eqref{eq:willems_reduceda} can be explicitly characterized as
\[\bg\in\{\Omega^{\dag}(x) \mathbf{k}_u(\mathbf{u}) + \Hat{\mathbf{g}} : \Hat{\mathbf{g}} \in \mathcal{N}(\Omega(x))\}.\]
Above, $\Omega^{\dag}(x)$, which always exists if $K_\otimes$ is full rank, is analytically given by
    $$
        \Omega^{\dag}(x) = K^{-1}_{\otimes} (I_{T_u} \otimes \mathbf{k}_{x}(x_k)).
    $$
    This is true because by assumption $K_{\otimes} \succ 0$, which implies that $K_{\otimes}^{-1} = K_{\otimes}^{\dag}$. Let $\mathbf{y}^{\otimes}$ denote the solution for $\mathbf{y}$ in \eqref{eq:willems} generated using the same data sets, then for all $x \in \mathcal{X}$ and for all $\mathbf{u} \in \mathcal{U}$ the solution is \emph{uniquely} given by:
    \begin{align*}
    \mathbf{y}^{\otimes} :=&  \mathbf{Y} K^{-1}_{\otimes} \mathbf{k}_{u}(\mathbf{u}) \otimes \mathbf{k}_{x}(x) \\
    =& \mathbf{Y} K^{-1}_{\otimes} (I_{T_u} \otimes \mathbf{k}_{x}(x))\mathbf{k}_{u}(\mathbf{u}) \\
    =& \mathbf{Y} \Omega^{\dag}(x) \mathbf{k}_{u}(\mathbf{u}).
\end{align*}
From this it can be concluded that $\mathbf{y}^{\Omega} =  \mathbf{y}^{\otimes}$ if and only if $\mathbf{Y}
\Hat{\mathbf{g}} = \mathbf{0}$ for all $\Hat{\mathbf{g}} \in \mathcal{N}(\Omega(x))$.
\end{proof}

From Lemma~\ref{lemma:reduction} it follows that the set of solutions for equations \eqref{eq:willems_reduceda} and \eqref{eq:willems_reducedb} are equivalent to the set of solutions for:  
 \begin{subequations}
\begin{align}
    \Omega(x) \Hat{\mathbf{g}} &= \mathbf{0}, \\ 
     \mathbf{Y}\left( \Omega^{\dag}(x) \mathbf{k}_{u}(\mathbf{u}) + \Hat{\mathbf{g}} \right)& = \mathbf{y} .
\end{align}
 \end{subequations}
Finally, to reduce the dimension of the vector of variables $\Hat{\mathbf{g}}\in \mathbb{R}^{T_x T_u}$, we adopt a projection on the output space, i.e., we aim to substitute $\Hat{\mathbf{g}}$ with $\Tilde{\mathbf{g}}\in\mathbb{R}^{Np}$, which are related by $\Hat{\mathbf{g}}=Y^{\dag}\Tilde{\mathbf{g}}$. This yields the following computationally efficient KerODeePC problem formulation.

\begin{problem} (Efficient KerODeePC Problem) \label{prob:KDPC-r}
    \begin{subequations}
\begin{align}
\min_{\Xi} &\quad l_N(y_{N|k}) + \sum_{i=0}^{N-1}l(y_{i|k},u_{i|k}) + \lambda l_g(\Tilde{\mathbf{g}}_k) , \label{eqn:DeePC1a-r} \\
&\text{subject to constraints:} \nonumber \\
&\Omega(x_k) \mathbf{Y}^{\dag}\Tilde{\mathbf{g}}_k = \mathbf{0} \label{eqn:DeePC1b-r}, \\
&\mathbf{Y} \Omega^{\dag}(x_k) \mathbf{k}_{u}(\mathbf{u}_{[0,N-1]}(k))) + \Tilde{\mathbf{g}}_k =\mathbf{y}_{[1,N]}(k), \label{eqn:DeePC1c-r}\\
&(\mathbf{y}_{[1,N]}(k), \mathbf{u}_{[0,N-1]}(k)) \in \Yset^N \times \Uset^N,\label{eqn:DeePC1d-r}
\end{align}
\end{subequations}
\end{problem}

In Problem~\ref{prob:KDPC-r} the free variables are now mapped into the predicted output space of dimension $pN$, which is
typically much smaller than $T_x T_u$. The price to pay is less free variables for optimizing the bias/variance trade–off and it is necessary that the output data matrix $\mathbf{Y}$ has full row-rank. Let $\mathbf{y}^{\Omega_e}_{[1,N]}(k)$ denote the solution to Problem~\ref{prob:KDPC-r} and let $\mathbf{y}^{\otimes}_{[1,N]}(k)$ denote the solutions to Problem~\ref{prob:KDPC} at time instant $k$. In the formulation of Problem~\ref{prob:KDPC-r}, when $\lambda \rightarrow \infty$, we have that $\Tilde{\mathbf{g}}_k \rightarrow 0$ and thus $\Hat{\mathbf{g}}_k \rightarrow 0$ by Lemma~\ref{lemma_1} this implies that $\mathbf{y}^{\Omega_e}_{[1,N]}(k) \rightarrow \mathbf{y}^{\otimes}_{[1,N]}(k)$.

\section{Illustrative example}\label{sec:example}
To assess the effectiveness of the developed Efficient KerODeePC scheme defined in Problem~\ref{prob:KDPC-r}, we consider the discretized Van der Pol oscillator, i.e.,
\begin{equation}\label{eqn:example}
    \begin{split}
        \begin{bmatrix}
            x_1(k+1) \\
            x_2(k+1)     
        \end{bmatrix} &= 
        \begin{bmatrix}
            1 & T_s \\
            -T_s & 1    
        \end{bmatrix}  \begin{bmatrix}
            x_1(k) \\
            x_2(k)     
        \end{bmatrix} +  \begin{bmatrix}
            0 \\
            T_s    
        \end{bmatrix} u(k) \\
         &+ 
        \begin{bmatrix}
            0 \\
            T_s \mu (1-x_1^2(k))x_2(k)
        \end{bmatrix}, \\
        y(k) &= x_1(k),
    \end{split}
\end{equation}
where $u(k)$ and $y(k)$ are the control input and output at time instant $k$, while $\mu =1$ and the sampling time is $T_s = 1/10$s. We apply Algorithm~\ref{alg:1} with $T_{u,ini}=100$ and $T_x = 20$. For the input sequence $\Tilde{\mathbf{u}}$, we generate a sine wave input using the \texttt{idinput} function. The parameters are set as: frequency band $[0, 1]$, input range $[-1, 1]$, and sine characteristics specified by $\text{SineData} = [25, 40, 1]$. The initial conditions are shown in \figurename{\ref{fig:initial}}.

For generating a set of input sequences $\mathbf{U}$ we use $T_{u}= 20$ and we use the \texttt{idinput} function with the same settings. The prediction horizon is set to $N=10$, this gives the input data matrix
$$
\mathbf{U} = \begin{bmatrix}
    u(1) & u(2) & \dots & u(T_u) \\
    \vdots & \vdots & \dots & \vdots \\
    u(N) & u(N+1) & \dots & u(N+T_u)
\end{bmatrix}\in \mathbb{R}^{10\times 20}.
$$
The output matrix $\mathbf{Y}$ is constructed by using Algorithm~\ref{alg:2} for the Van der Pol oscillator \eqref{eqn:example}. We use Gaussian kernels, i.e.,
$$
k_u(u_i,u_j) = e^{ -\frac{1}{\sigma_u^2}\|u_i - u_j \|_2^2}, \quad k_x(x_i,x_j) = e^{- \frac{1}{\sigma_x^2}\|x_i - x_j \|_2^2},
$$
with $\sigma_u = 50$ and $\sigma_x = 3$. Using the data matrices $\mathbf{X}_0$, $\mathbf{Y}$ and $\mathbf{U}$ we construct the product kernel gram matrix $K_{\otimes}\in\mathbb{R}^{400\times 400}$ and the corresponding kernel vector $\mathbf{k}_{\otimes}(\mathbf{u},x)\in\mathbb{R}^{400}$. We use these data matrices to implement Problem~\ref{prob:KDPC-r}. In order to validate the proposed method we compare with a stacked kernel formulation corresponding to the formulations in \cite{huang2023robust,maddalena2021kpc}. To this end, define the stacked variable \(\mathbf{z}_{i} = \operatorname{col}(x_i, \col(u_i,\dots,u_{i+N-1}))\) resulting in the kernel vector and Gram matrix:
    \begin{subequations}
\begin{align}
    &\mathbf{k}_{z}(\mathbf{z}) := 
\begin{bmatrix}
    k(\mathbf{z}, \mathbf{z}_{1}) \\
    \vdots \\
    k(\mathbf{z}, \mathbf{z}_{T})
\end{bmatrix} \in \mathbb{R}^{T}, \\
&K_z := 
\begin{bmatrix}
    \mathbf{k}_{z}(\mathbf{z}_{1}) & \dots & \mathbf{k}_{z}(\mathbf{z}_{T})
\end{bmatrix} \in \mathbb{R}^{T \times T}.
\end{align}
\end{subequations}
In order to analyze the predictive performance of both models we calculate the following multi-step input-output identified predictors:
    \begin{align}\label{eqn:prod_pm}
    \mathbf{y}^{product}_{[1,N]}(k) &:= \mathbf{Y}K_{\otimes}^{-1}  \mathbf{k}_{\otimes}(\mathbf{u}_{[0,N-1]}(k),x_k). \\
    \mathbf{y}^{stacked}_{[1,N]}(k) &:= \mathbf{Y}K_{z}^{-1}  \mathbf{k}_{z}(\col(x_k,\mathbf{u}_{[0,N-1]}(k))).
\end{align}
The input signal is again generated using \texttt{idinput} with the same settings as for the product kernel formulation and the same data length $T=T_x T_u = 400$. This signal is then applied to the Van der Pol oscillator \eqref{eqn:example}. In order to have a fair comparison we use as kernel function $k(\mathbf{z}_j,\mathbf{z}_i) = e^{-\sum_{i=1}^{N+n}\frac{1}{\sigma_i^2}(\mathbf{z}_j-\mathbf{z}_i)^2}$, where $\sigma_i=3$ for $i=1:n$ and $\sigma_i=50$ for $i=n+1:n+N$, i.e. the same weightings as for the product kernel formulation. Due to the stacking of initial conditions and input sequences it is no longer possible to decouple the equations. As a workaround, we implemented the stacked kernel formulation as in Problem~\ref{prob:KDPC} where $K_{\otimes}$ and $\mathbf{k}_{\otimes}$ are replaced with $K_{z}$ and $\mathbf{k}_{z}$. 

The problems for both kernel formulations are solved using the \texttt{fmincon} solver. The stacked kernel formulation requires much more time, see Table~\ref{tab:comp}. For $T_x=20$, $T_u=20$ and $T=400$, the required computation time per control action is $0.3319$s for the product kernel formulation compared with $33.8668$s for the stacked kernel formulation. Once we increase the data to $T_x=200$, $T_u=50$ and $T=10000$ it is no longer possible to apply the stacked kernel formulation for control while the product kernel formulation can still be solved relatively fast $1.3508$s, which is still significantly faster than $33.8668$s for the stacked kernel formulation with a much smaller data size $T=400$. See \figurename{\ref{fig:initial}} for the set of initial conditions for $T_x=200$.
\begin{table}[t!]
\centering
\caption{Computational Time Comparison}
\begin{tabular}{|c||c|c|c|c|}
 \hline
 \multirow{2}{*}{\textbf{Comp. Time} } &
      \multicolumn{2}{c|}{$\mathbf{T = 400}$} &
      \multicolumn{2}{c|}{$\mathbf{T = 10000}$}  \\
      & \textbf{Stacked} & \textbf{Product} & \textbf{Stacked} & \textbf{Product}  \\
 \hline\hline
Gram inversion & $0.0042$s & \textcolor{green_dark}{$0.0008$s} & $18.3382$s & \textcolor{green_dark}{$0.0356$s} \\ 
 \hline
Gram construction & $0.0401$s & \textcolor{green_dark}{$0.0060$s} & $425.2443$s & \textcolor{green_dark}{$0.0552$s} \\ 
 \hline
Control action & $33.8668$s & \textcolor{green_dark}{$0.3319$s} & \textcolor{red}{X} & \textcolor{green_dark}{$1.3508$s} \\ 
 \hline
\end{tabular}
\label{tab:comp}
\end{table}
\begin{table}[t!]
\centering
\caption{Predictive Control Performance Comparison}
\begin{tabular}{|c||c|c|c|c|}
 \hline
 \multirow{2}{*}{\textbf{ } } &
      \multicolumn{2}{c|}{$\mathbf{T = 400}$} &
      \multicolumn{2}{c|}{$\mathbf{T = 10000}$}  \\
      & \textbf{Stacked} & \textbf{Product} & \textbf{Stacked} & \textbf{Product}  \\
 \hline\hline
 Mean tracking error & $0.0994$ & \textcolor{green_dark}{$0.0917$}  & \textcolor{red}{X}  & \textcolor{green_dark}{$0.0835$}  \\ 
 \hline
Mean prediction error & $0.0216$ & \textcolor{green_dark}{$0.0183$} & \textcolor{red}{X} & \textcolor{green_dark}{$0.0157$} \\ 
 \hline
\end{tabular}
\label{tab:performance}
\end{table}
\begin{figure}[b!]
  \centering
\includegraphics[scale=0.78,trim=0.0cm 0.0cm 0.0cm 0.0cm,clip]{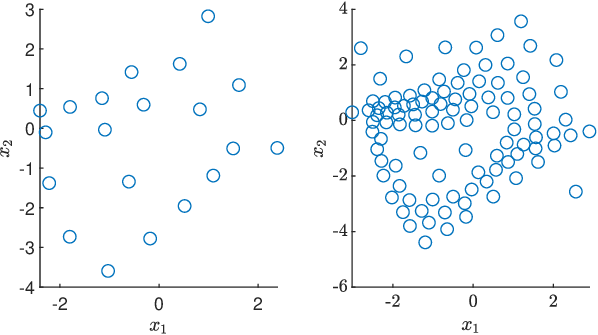}\vspace{-.2cm}
  \caption{Initial conditions for $T_x = 20$ and $T_x = 200$ generated using Algorithm~\ref{alg:1}.}
  \label{fig:initial}
\end{figure}
\begin{figure}[t!]
  \centering
\includegraphics[scale=0.78,trim=0.0cm 0.0cm 0.0cm 0.0cm,clip]{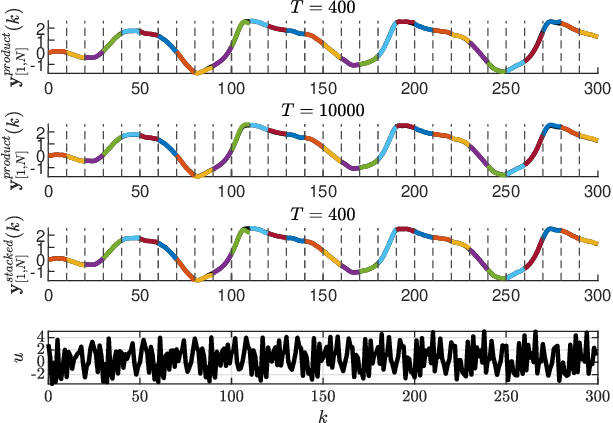}\vspace{-.2cm}
  \caption{Comparison of $N=10$ step-ahead predictions (colored lines) from the stacked/product kernel prediction models against the true system trajectory (black line) for various initial conditions.}
  \label{fig:test}
\end{figure}
\begin{figure}[b!]
  \centering
\includegraphics[scale=0.78,trim=0.0cm 0.0cm 0.0cm 0.0cm,clip]{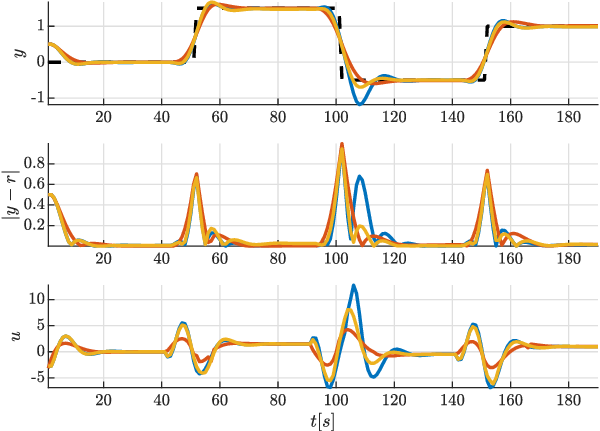}\vspace{-.2cm}
  \caption{Reference tracking for product kernel, $T=400$ (\textcolor{Matlab_blue}{\textbf{---}}), stacked kernel, $T=400$ (\textcolor{Matlab_orange}{\textbf{---}}), product kernel, $T=10000$ (\textcolor{Matlab_yellow}{\textbf{---}}).}
  \label{fig:tracking}
\end{figure}
\begin{figure}[b!]
  \centering
\includegraphics[scale=0.78,trim=0.0cm 0.0cm 0.0cm 0.0cm,clip]{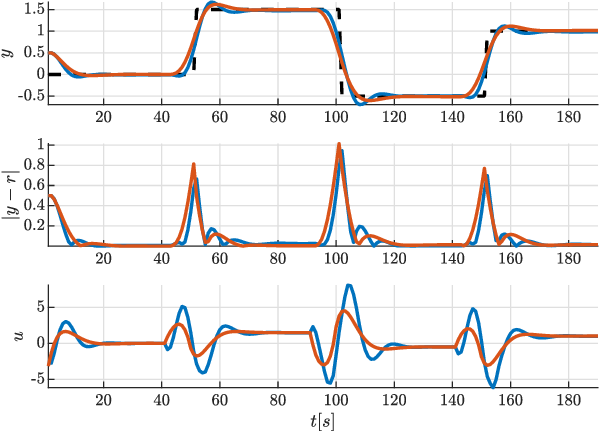}\vspace{-.2cm}
  \caption{Reference tracking for product kernel, $T=10000$ (\textcolor{Matlab_blue}{\textbf{---}}), compared with NMPC (\textcolor{Matlab_orange}{\textbf{---}}).}
  \label{fig:tracking2}
\end{figure}
Note also that the inverse calculation and construction time for the gram matrix are much longer for the stacked kernel than for the product kernel formulation, see Table~\ref{tab:comp}. While the computation times are much shorter for the product kernel, the prediction error of both methods is comparable when the same amount of data is used, see \figurename{\ref{fig:test}}. The average prediction error for all methods is shown in and Table~\ref{tab:performance}. The tracking performance of all methods is compared by tracking a piecewise constant reference signal, see \figurename{\ref{fig:tracking}}. The tracking performance of the product kernel formulations is on average slightly better than the performance of the stacked kernel formulation as shown in Table~\ref{tab:performance}. However, in \figurename{\ref{fig:tracking}} a higher overshoot is visible after a large step in the reference signal. Adding more data improves the performance such that the overshoot is removed. A comparison for the product kernel formulation with $T_x=200$ and $T_u=50$ compared with NMPC is shown in \figurename{\ref{fig:tracking2}}. The product kernel has a tracking performance that is comparable with NMPC. 

\section{Conclusions}
In this work, we presented a novel kernel based operator approach for nonlinear data-enabled predictive control using the product kernel operator learning framework introduced in \cite{Lazar2024productkernel}. The resulting KerODeePC formulation was rendered computationally efficient, particularly for large datasets, by exploiting the product kernel structure. Through numerical examples, we demonstrated that the required computation time per sampling instance was reduced by a factor 30 compared with existing methods when the exact same data set is used. Due to this computational advantage the proposed method allows for much larger data sets yielding superior performance. 

Future work will deal with researching methods for reducing the product kernel dimension without loosing learning accuracy, e.g., by exploiting kernel principal component analysis or sparse feature selection. 


\end{document}